\numberwithin{equation}{section}
\def\ZZ{{\mathds Z}}
\def\NN{{\mathds N}}
\newcommand{\KK}{\mathbb{K}}
\def\sdepth{\operatorname{sdepth}}
\def\size{\operatorname{size}}
\def\min{\operatorname{min}}
\def\Ass{\operatorname{Ass}}
\def\height{\operatorname{height}}
\def\frk{\frak}
\DeclareMathOperator{\lcm}{lcm}
\newcommand*\xbar[1]{%
  \hbox{%
    \vbox{%
      \hrule height 0.5pt 
      \kern0.5ex
      \hbox{%
        \kern-0.1em
        \ensuremath{#1}%
        \kern-0.1em
      }%
    }%
  }%
}
\newcommand{\x}[2]{X_{{#1},{#2}}}           
\let\frak=\mathfrak
\let\Bbb=\mathbb
\let\phi=\varphi
\def\NZQ{\Bbb}
\def\NN{{\NZQ N}}
\def\ZZ{{\NZQ Z}}
\newtheorem{lemma}{Lemma}[section]
\newtheorem{corollary}[lemma]{Corollary}
\newtheorem{theorem}[lemma]{Theorem}
\newtheorem{proposition}[lemma]{Proposition}
\theoremstyle{definition}
\newtheorem{definition}[lemma]{Definition}
\newtheorem{remark}[lemma]{Remark}
\newtheorem{example}[lemma]{Example}
\newcommand{\set}[1]{\{#1\}}
\newcommand{\with}{\,:\,}
\newcommand{\gen}[1]{{#1}_\varepsilon}
\newcommand{\MM}{\mathcal{M}}
\newcommand{\ve}{\varepsilon}
\newcommand{\xf}{\mathbf{x}}
\begin{document}

\title{On the behavior of the size of a monomial ideal}

\author{Bogdan Ichim and Andrei Zarojanu}

\address{Simion Stoilow Institute of Mathematics of the Romanian Academy, Research Unit 5, P.O. Box 1-764, 014700 Bucharest, Romania}
\email{bogdan.ichim@imar.ro}

\address{Simion Stoilow Institute of Mathematics of the Romanian Academy, Research Unit 5, P.O. Box 1-764, 014700 Bucharest, Romania}
\email{andrei\_zarojanu@yahoo.com}

\keywords{monomial ideal; size; polarization; deformation.}
\subjclass[2010]{Primary: 05E40; Secondary: 16W50.}

\begin{abstract}
In this paper we study the behavior of the $\size$ of a monomial ideal under polarization and under generic deformations.
As an application, we extend a result relating the $\size$ and the Stanley depth of a squarefree monomial ideal obtained by Herzog, Popescu and Vladoiu, together with a parallel result obtained by Tang.
\end{abstract}

\maketitle

\section{Introduction}

Let $S=\KK[X_1,...,X_n]$, with $\KK$ a field, and let $I\subset S$ be a monomial ideal.
The notion of \emph{size} of a monomial ideal was introduced by Lyubeznik in \cite{L}.
In time, it has been used by several authors, see for example  \cite{HPV}, \cite{HTT}, \cite{P}, \cite{S} and \cite{T}.

Several algebraic or combinatorial invariants associated to a monomial ideal are known to have a nice behavior under \emph{polarization}. For example, see \cite{F}, \cite{GPW} or \cite{IKM}.
In the first part of this paper we study the behavior of the $\size I$ under polarization.  In Section \ref{sec:polarization} we establish that
$$
\size I^p \leq \size I + c,
$$
where $I^p\subset S'=\KK[X_1,...,X_{n'}]$ is the polarization of $I$ and $c=n'-n$ (see Theorem \ref{th:first}). The equality does not hold in general, as shown in Example \ref{ex:no_eq}.

In the main result of this paper, that is
Theorem \ref{th:main}, we provide a complete description of the (particular) situation when the equality
$$
\size I^p = \size I + c
$$
does hold.

A counterexample by H. Shen shows that the second statement of \cite[Lemma 3.2]{HPV} is false when $I$ is not
squarefree. It follow that the proof \cite[Theorem 3.1]{HPV} is correct only when I is squarefree, and that the statement of \cite[Theorem 3.1]{HPV} is in fact a conjecture in general.
As an application of out main result we deduce in Corollary \ref{cor:sdepth} that this conjecture is true under the conditions described in Theorem \ref{th:main}.
In the same Corollary and under the same conditions we also obtain an extension of \cite[Theorem 3.2]{T}.

The notion of \emph{deformation} of a monomial ideal was introduced by Bayer et al.~\cite{BPS} and further developed in Miller et al.~\cite{MSY}. The most important deformations are the generic deformations, which attracted the attention of several researchers, see for example \cite{A} or \cite{IKM2}. In the last part of this paper (Section \ref{sec:deform}) we briefly study the behavior of the $\size I$ under generic deformations. We find that
$$
\size I_{\epsilon}\leq \size I,
$$
where $I_{\epsilon}$ is a generic deformation of $I$ (see Proposition \ref{prop:def}).

\section{Prerequisites}\label{Pre}

  Let $S=\KK[X_1,...,X_n]$, with $\KK$ a field. For $n \in \NN$ we use the notation $[n]:=\{1,\ldots ,n\}$.

\subsection{The size of an ideal} \label{subsec:size} In this Subsection we recall the definition of $\size$ and we make some easy remarks;
these will be needed in the sequel.

Let $I\subset S$  be a monomial ideal and $I=\bigcap\limits_{i=1}^sU_i$ an irredundant primary decomposition
of $I$, with $U_i$ monomial ideals. Let $U_i$ be $V_i$-primary. Then each $V_i$ is a monomial prime ideal and $\Ass(S/I)=\{V_1,\ldots,V_s\}$.

\begin{definition}
Following  Lyubeznik \cite[Proposition 2]{L} the {\em size} of $I$, denoted in the following by $\size I$,
is the number $v + (n - h) - 1$,  where $v$ is  the minimum number $t \le s$ such that there exist $i_1<\cdots< i_t$  with
\[
\sqrt{\sum_{j=1}^tU_{i_j}}=	\sqrt{\sum_{i=1}^sU_i},
\]
and where $h=\height \sum_{i=1}^sU_i$.
\end{definition}

A monomial ideal is \emph{irreducible} if it is generated by powers of some variables.
An irreducible decomposition of a monomial ideal $I$ is an expression $I = I_1 \cap \ldots \cap I_r$ with the $I_j$ irreducible. Every irreducible monomial ideal is primary, so an irreducible
decomposition is a primary decomposition \cite[Theorem 1.3.1]{HH}.

\begin{remark} Observe that $\sqrt{\sum_{j=1}^tU_{i_j}} =\sum_{j=1}^tV_{i_j}$ and $\sqrt{\sum_{i=1}^sU_i}=\sum_{i=1}^sV_i$, so that the size of $I$ only depends on the set of associated prime ideals of $S/I$.
Indeed, consider an irredundant irreducible decomposition
$$I=\bigcap\limits_{l=1}^r Q_l,\ r \leq s,$$
where $Q_l$ are monomial ideals and where $P_l = \sqrt{Q_l}$.
Then, for each $i \in [s]$ we have that $U_i = \bigcap\limits_{j=1}^{k_i} Q_{l_j}$ where $P_{l_j} = V_i$. Using the definition of $\size I$  we get the same result as above.  From now on we will only consider irredundant irreducible decompositions.
\end{remark}

\begin{remark}
Working with $\size$ we can assume that $\sum_{i=1}^{r} P_{i} = \frk{m}$, where $\frk{m}$ is the maximal ideal. Otherwise, set $X= \{ X_1,\ldots,X_n \}$, $Z= \{X_k|\ X_k \notin \sum_{i=1}^{s} P_{i} \}$, $T=\KK[X \setminus Z]$ and let $J=I \cap T$. Then the sum of the associated prime ideals of $J$ is the maximal ideal of $T$ and
\begin{center}
 $\size I = \size J + |Z|$.
\end{center}
\end{remark}

\begin{remark}
Let $I \subset S$ be a monomial ideal and let $I=\bigcap\limits_{i=1}^rQ_i$ be an irredundant decomposition
of $I$ as an intersection of irreducible ideals, where $\sqrt{Q_i} = P_i.$ Then $\sqrt{I} = \bigcap\limits_{i=1}^rP_i$ and so we have that $\size I = \size \sqrt{I}.$
In general it easy to see that, if $\Ass S/I \subseteq \Ass S/J$ for two monomial ideals, then $\size I \geq \size J$.
\medskip

\end{remark}

\subsection{The polarization of a monomial ideal} \label{subsec:polarization}
In the following we study the behavior of the size of a monomial ideal under polarization.
We recall the definition of polarization following Herzog and Hibi \cite{HH}.
Let $I \subset S$ be a monomial ideal with generators $u_1, \ldots, u_m$, where $u_i=\prod_{j=1}^{n} X_j^{a_{ij}}$ for $i=1, \ldots , m$. For each $j$ let $a_j=\max \set{a_{ij} : i=1, \ldots , m}$.
Set $a=(a_1,\ldots,a_n)$ and $S'$ to be the polynomial ring
\[
S':=\KK[X_{k,l} \with 1 \leq k \leq n,\ 1 \leq l \leq a_j].
\]

Then the \emph{polarization of $I$} is the squarefree monomial ideal $I^p \subset S'$ generated by $v_1, \ldots , v_m$, where
\[
v_i=\prod_{k=1}^{n} \prod_{l=1}^{a_{ij}} X_{k,l} \ \ \ \mbox{for} \ \ i=1, \ldots, m.
\]
\medskip

The \emph{Stanley depth} of an $S$-module $M$ is a combinatorial invariant denoted in the following by $\sdepth M$.
We skip the details since this invariant will only appear briefly in Corollary \ref{cor:sdepth}.
For an excellent account on the subject, the reader is referred to Herzog's survey \cite{H}. The following Theorem follows immediately from the main result of \cite{IKM}.

\begin{theorem}\label{teo:sdepth} Let $I \subset S=\KK[X_1,...,X_n]$ be a monomial ideal and $I^p\subset S'=\KK[X_1,...,X_{n'}]$ be the polarization of $I$. Set $c=n'-n$. Then
\begin{enumerate}
\item $\sdepth I^p = \sdepth I + c;$
\item $\sdepth S^p/I^p = \sdepth S/I + c.$
\end{enumerate}
\end{theorem}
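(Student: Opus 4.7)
The plan is to derive the Theorem as a direct consequence of the main result of \cite{IKM}, which describes how the Stanley depth transforms under polarization; essentially no further combinatorial work is required.

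First I would recall that polarization can be performed iteratively, one variable at a time. Namely, given a monomial ideal $I \subset S$ and a variable $X_k$ whose maximal exponent $a_k$ among the generators is at least $2$, one elementary polarization step introduces a single new variable $Y$ and replaces, in each generator, one factor of $X_k$ by $Y$, producing an ideal $\widetilde I \subset S[Y]$. After exactly $c = n'-n = \sum_{k=1}^{n}(a_k - 1)$ such elementary steps one reaches the polarization $I^p \subset S'$ described in Subsection~\ref{subsec:polarization}.

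Next I would invoke the main theorem of \cite{IKM}, which for a single elementary polarization step yields
\[
\sdepth \widetilde I = \sdepth I + 1 \quad \mbox{and} \quad \sdepth S[Y]/\widetilde I = \sdepth S/I + 1.
\]
Both equalities (1) and (2) of the present Theorem then follow by iterating these identities $c$ times.

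The only point requiring verification is that each intermediate partial polarization is itself a monomial ideal to which \cite{IKM} applies, and that the composition of the $c$ elementary steps coincides with the simultaneous polarization defined in Subsection~\ref{subsec:polarization}; both assertions are immediate from the definitions. Hence the main obstacle is purely bookkeeping, and the Theorem is read off from \cite{IKM} with no genuine new input.
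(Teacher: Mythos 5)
Your proposal is correct and takes essentially the same route as the paper, which gives no independent argument but simply records the theorem as an immediate consequence of the main result of \cite{IKM}. The iteration over one-variable polarization steps you sketch is the mechanism used inside \cite{IKM}'s own proof, but the theorem stated there already treats the full polarization of an arbitrary quotient $I/J$ in a single application, so specializing to $J=(0)$ and to $I=S$ yields parts (1) and (2) directly without any iteration on your end.
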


Finally, we recall the most important known results relating $\sdepth$ and $\size$.

\begin{theorem}\label{teo:sdepth_size} Let I be as squarefree monomial ideal of $S$.Then
\begin{enumerate}
\item $\sdepth I \ge \size I + 1$ (see \cite[Theorem 3.1]{HPV});
\item $\sdepth S/I \ge \size I $ (see \cite[Theorem 3.2]{T}).
\end{enumerate}
\end{theorem}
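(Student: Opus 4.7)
The plan is to prove both inequalities by induction on the number of variables $n$, after reducing to the case $\sum_{i=1}^{s} P_i = \mathfrak{m}$. This reduction is possible because both $\sdepth I$ and $\sdepth S/I$ increase by exactly $|Z|$ when one enlarges the polynomial ring by variables $Z$ absent from all $P_i$, matching the increase of $\size I$ recorded in Remark 2.3. Under this normalization $h = n$, so the target bounds read $\sdepth I \ge v$ and $\sdepth S/I \ge v-1$, where $v$ is the minimum number of associated primes whose sum equals $\mathfrak{m}$.

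The base case is $s = 1$, where $I = P_1$ is prime of height $h$, so $v = 1$ and $\size I = n - h$; the standard Stanley decomposition of a squarefree monomial prime yields $\sdepth I \ge n - h + 1 = \size I + 1$, and $S/I \cong \KK[X_{h+1}, \ldots, X_n]$ gives $\sdepth S/I = n - h = \size I$. For the inductive step with $s \ge 2$, irredundance forces $v \ge 2$: if $v = 1$, some $P_i$ would contain $\sum_j P_j$ and hence every $P_j$, making the intersection redundant. The strategy is to pick a variable $X_k$ lying in at least one but not all of the $P_i$---such a variable must exist, since if every $X_k$ belonged to every $P_i$ then $\bigcap_i P_i = \mathfrak{m}$, forcing $s = 1$---and to exploit the $\KK$-vector space splitting
\[
I = X_k (I : X_k) \oplus \bigl( I \cap \KK[X_j : j \neq k] \bigr).
\]
Here $I : X_k$ has strictly fewer associated primes (only those $P_i$ not containing $X_k$ survive), and $I \cap \KK[X_j : j \neq k]$ is a squarefree ideal in one fewer variable. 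Applying the induction hypothesis to each summand and merging the resulting Stanley decompositions will yield the bound on $\sdepth I$; an analogous splitting of $S/I$ via the short exact sequence $0 \to S/(I : X_k) \xrightarrow{\cdot X_k} S/I \to S/(I, X_k) \to 0$ delivers the bound on $\sdepth S/I$.

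The main obstacle is the combinatorial bookkeeping for $\size$ under the splitting: one must verify that $X_k$ can be chosen so that $\size$ of both $I : X_k$ and $I \cap \KK[X_j : j \neq k]$ is at least $v - 1$, guaranteeing that the inductive hypothesis delivers pieces with $\sdepth$ at least $v$. This control hinges essentially on the squarefree hypothesis, since then $I : X_k = \bigcap_{i : X_k \notin P_i} P_i$ remains squarefree with a transparent set of associated primes; the analogous identity fails for general monomial ideals, which is precisely the mechanism behind Shen's counterexample to \cite[Lemma 3.2]{HPV} referenced in the introduction.
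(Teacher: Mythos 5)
The paper does not prove this statement at all: it appears in Section~2 (Prerequisites), immediately after the sentence \qq{Finally, we recall the most important known results relating $\sdepth$ and $\size$,} with item~(1) attributed to Herzog--Popescu--Vladoiu \cite[Theorem 3.1]{HPV} and item~(2) to Tang \cite[Theorem 3.2]{T}. There is therefore no proof in this paper for your sketch to be checked against; the results are imported as black boxes.

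Taken on its own terms, your sketch is a reasonable plan but is not yet a proof, and you say so yourself. The reduction to $\sum_i P_i=\frak m$, the prime base case, and the $\ZZ^n$-graded splitting $I = X_k(I:X_k)\dirsum\bigl(I\cap\KK[X_j:j\ne k]\bigr)$ (with the companion short exact sequence for $S/I$) are all fine. But the inductive step is left open at exactly the point where the theorem lives: you must actually exhibit a variable $X_k$ for which both pieces retain $\size$ at least $v-1$, and you flag this as the \qq{main obstacle} without resolving it. This is genuinely delicate. On the colon side, $I:X_k=\bigcap_{X_k\notin P_i}P_i$ has $X_k$ as a free variable, so its $\size$ over $S$ already absorbs that free variable and the count of needed primes $v$ may drop by more than one. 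On the restriction side, after deleting $X_k$ some primes $P_i$ with $X_k\in P_i$ may become comparable to others and drop out of the irredundant decomposition of $I\cap\KK[X_j:j\ne k]$, again altering $v$ in a way you have not controlled. Until that bookkeeping is carried out --- which is precisely the technical content of the cited HPV and Tang arguments --- your induction does not close.
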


For an extension of Theorem \ref{teo:sdepth_size} see \cite{Fak}.


\section{The behavior of size under polarization} \label{sec:polarization}

Let $S=\KK[X_1,...,X_n]$, with $\KK$ a field, and $I \subset S$ be a monomial ideal. Throughout the section, we fix  $I=\bigcap\limits_{i=1}^rQ_i$ to be the unique irredundant irreducible
decomposition of $I$. For each $i\in [r]$ we have
$$Q_i=(X_1^{a_1^i},\ldots,X_n^{a_n^i}),$$
where $a_k^i \in \mathbb{N}$ for all $k \in [n]$. In this writing we use the following convention: Assume $Q_i$ to be $P_i$-primary. Then, if $X_k \not\in P_i$, we set $a_k^i = 0$ and $X_k^{a_k^i}=0$.

Let $a_k = \max \{a_k^1,\ldots,a_k^r\}$, for all $k\in [n]$. Denote by $n'=\sum\limits_{k=1}^n a_k$ and set $c = n'-n$. Set

\[
S':=\KK[X_{k,l} \with 1 \leq k \leq n, 1 \leq l \leq a_j].
\]

Let $I^p \subset S'$ be the polarization of $I$ and $Q_i^p \subset S'$ be the polarization of $Q_i$. Then
$$I^p=\bigcap\limits_{i=1}^rQ_i^p$$ by \cite[Proposition 2.3]{F}. Moreover, by \cite[Proposition 2.5]{F}, it holds
\[ Q_i^p=\bigcap_{\stackrel{\mbox{$\scriptstyle 1 \leq b_k \leq
a_k^i$}}{ \mbox{$\scriptstyle 1 \leq k \leq n$}}} (\x{1}{b_1}, \ldots,
\x{n}{b_n})\]
with the convention that, if $a_k^i = 0$, then $b_k = X_{k,0} = 0$.

\begin{definition}

 Remark that for all $i \in [r]$ there exists at least one index $k \in [n]$ such that $a^i_k \geq a^j_k$ for all $j \in [r]$. We say that $a^i_k$ is a \textit{top power} for $Q_i$. For a subset $N \subseteq [n]$ we  set $B^i_N = \{ a^i_k : a^i_k$ is a top power for $Q_i$ and $k \in N \}$. Note that $B^i_{[n]} \neq \emptyset$ for all  $i \in [r]$.   For a subset $R \subseteq [r]$ we introduce the notion \textit{top base} (denoted by $C$ by the following recursive algorithm). Let $M \in \mathcal{M}_{r,n} (\ZZ)$ be the matrix with $M_{ij} = a^i_j$, for all $i \in [r],\ j \in [n]$.
\end{definition}

\allowdisplaybreaks
\begin{algorithm}[H]\label{algo:topbase}
\SetKwFunction{ReadTopPowers}{{\bf ReadTopPowers}}
\SetKwFunction{AddElement}{{\bf AddElement}}
\SetKwFunction{Beg}{begin}
\SetKwFunction{En}{end}
\SetKwFunction{min}{min}
\SetKwFunction{max}{max}
\SetKwFunction{BuildTopBase}{{\bf BuildTopBase}}
\SetKwFunction{NewEmptyList}{{\bf NewEmptyList}}
\SetKwData{Int}{Int}
\SetKwData{Container}{Container}
\SetKwData{List}{List}
\SetKwData{Vector}{Vector}
\SetKwData{Set}{Set}
\SetKwData{Sets}{Sets}
\SetKwData{Matrix}{Matrix}
\caption{Function which computes a top base}

\KwData{$r,n \in \NN$, a \Matrix $M\in \mathcal{M}_{r,n}(\ZZ)$ and the \Sets $R\subseteq [r],\ N\subseteq [n]$}
\KwResult{A List $C$ containing a top base}
\nl \Vector $C$ = NewVector(r,0), R = [r], N = [n]\;
\nl \List \BuildTopBase{M,R,N}\;
\Begin{
\nl \If {$R = \emptyset$}{
\nl \Return{$C$}\;}
\nl $i = \min{R}$\;
\nl \List $B_N^i$ = \ReadTopPowers(i,N,M)\;
\nl \eIf {$B_N^i = \emptyset$}{
\nl \Set $\tilde{R} = R \setminus \{i\}$\;
\Return{\BuildTopBase{M,$\tilde{R}$,N}\;}
}{
\nl \For { j=\Beg{N} \KwTo j=\En{N} }{
\nl \If {$a_j^i$ = \max{$B_N^i$}}{
\nl $C[i] \longleftarrow (a_j^i,i,j)$)\;
\nl \Set $\tilde{R} = R \setminus \{i\}$\;
\nl \Set $\tilde{N} = N \setminus \{j\}$\;
\nl \Return{\BuildTopBase{M,$\tilde{R},\tilde{N}$}}\;
}
}
}
}
\end{algorithm}

Below we describe the key steps.

\begin{itemize}
	\item line 1. We initialize the Vector $C$ to have $0$ on all components and lenght $r$.
	\item line 6. We read the top powers from $M$ on the line $i$. If we find any of these on the columns $j \in N$ then we include them in the list $B^i_N.$
	\item line 7. If there aren't any top powers on line $i$ and the columns $j \in N$ from $M$ then we skip to the next line in $M.$
\end{itemize}

Remark that a top base is not unique and it depends on the choice of the maximal top powers from each line, as the following example shows.

\begin{example}
Let $I= (x^{10},y^{10},z) \cap (x^{10},y^2) \cap (x,z^4).$ Then we have that

\[M = \begin{pmatrix}
  \textbf{10} & \textbf{10} & 1 \\
  \textbf{10} & 2 & 0 \\
  1 & 0 & \textbf{4}
    \end{pmatrix}\]
where the top powers are boldfaced.

We see that $\{a_1,a_2,a_3\} = \{10,10,4\}$ and we start the algorithm above to compute a top base.
At the first step we can select $c_1 = a_1^1 = 10.$

\[ \begin{pmatrix}
   \textbf{10} & \textbf{10} & 1 \\
  \textbf{10} & 2 & 0 \\
  1 & 0 & \textbf{4}
    \end{pmatrix}
    \longrightarrow
    \begin{pmatrix}
      \cancel{\textbf{10}} & \cancel{\textbf{10}} & \cancel{1} \\
  \cancel{\textbf{10}} & 2 & 0 \\
  \cancel{1} & 0 & \textbf{4}
    \end{pmatrix}\]

Now in the second line we don't have any top powers, thus $c_2=0$ so we go to the third line from where we get $c_3 = 4.$
So we obtained the top base $\{10,0,4\}.$

    \[    \begin{pmatrix}
      \cancel{\textbf{10}} & \cancel{\textbf{10}} & \cancel{1} \\
  \cancel{\textbf{10}} & 2 & 0 \\
  \cancel{1} & 0 & \textbf{4}
    \end{pmatrix}
    \longrightarrow
    \begin{pmatrix}
      \cancel{\textbf{10}} & \cancel{\textbf{10}} & \cancel{1} \\
  \cancel{\textbf{10}} & \cancel{2} & \cancel{0} \\
  \cancel{1} & 0 & \textbf{4}
    \end{pmatrix}\]

Now, if we choose $c_1 = a_2^1$ we obtain a different top base $\{10,10,4\}.$

  \[   \begin{pmatrix}
   \textbf{10} & \textbf{10} & 1 \\
  \textbf{10} & 2 & 0 \\
  1 & 0 & \textbf{4}
    \end{pmatrix}
    \longrightarrow
     \begin{pmatrix}
      \cancel{\textbf{10}} & \cancel{\textbf{10}} & \cancel{1} \\
  \textbf{10} & \cancel{2} & 0 \\
  1 & \cancel{0} & \textbf{4}
    \end{pmatrix}
    \longrightarrow
    \begin{pmatrix}
      \cancel{\textbf{10}} & \cancel{\textbf{10}} & \cancel{1} \\
  \cancel{\textbf{10}} & \cancel{2} & \cancel{0} \\
  \cancel{1} & \cancel{0} & \textbf{4}
    \end{pmatrix}\]

\end{example}

\begin{definition} \label{defbar}
Consider the top base $C$ from the above algorithm and set $c_i = C[i][1], i\in [r]$. In the following we denote
$$
\xbar{Q^p_{i,j}}=(\x{1}{\min\{j,a_1^i\}},\ldots,\x{n}{\min\{j,a_n^i\}}) \text{ for } j\in [c_i],
$$
where $X_{i,0} = 0$ and if $c_i = 0$, then consider $\xbar{Q_{i,j}} = (1)$ in intersections of ideals and $\xbar{Q_{i,j}} = (0)$ in sums of ideals,
$$
\xbar{Q^p_i}= \bigcap\limits_{j=1}^{c_i} \xbar{Q^p_{i,j}} = (\x{1}{1},\ldots,\x{n}{1}) \cap (\x{1}{2},\ldots,\x{n}{2}) \cap \ldots \cap (X_{1,\min\{c_i,a_1^i\}},\ldots,X_{n,\min\{c_i,a_n^i\}})
$$
and
$$
\xbar{I^p}=\bigcap\limits_{i=1}^{r}\xbar{Q^p_i}.
$$
\end{definition}

\begin{remark}
Following Algorithm 1 we have that $\{c_1,\ldots,c_r\} \setminus \{0\} \subset \{a_1,\ldots,a_n\}.$ Thus $\sum\limits_{i=1}^r c_i \leq \sum\limits_{k=1}^n a_k$, which implies that $\sum\limits_{i=1}^r c_i \leq c+r.$
\end{remark}

\begin{definition} Set $A = \big\{ \{ j_1, \ldots, j_{w+1} \} \mbox{ such that } \sqrt{\sum_{k=1}^{w+1}Q_{j_k}}=\sqrt{\sum_{j=1}^rQ_j} \},\ w = \size I \big\}.$
We also use the notations $|A| = m$ and $A = \cup_{h=1}^m A_h.$
\end{definition}

\begin{remark} \label{size}
We may suppose (eventually after renumbering the ideals $Q_i$) that we always have $A_1 = \{1,2,\ldots,w + 1\} \in A,$ where $w = \size I.$
\end{remark}

\begin{theorem}\label{th:first}  $\size I^p \leq \size I + c.$
\end{theorem}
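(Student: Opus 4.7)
First I would use the Remark in Subsection~\ref{subsec:size} to reduce to the case $\sum_{i=1}^{r} P_i = \mm$. Under this reduction one has $h = n$ and $\size I = v - 1 = w$, where $v$ is the minimum cardinality of a subset of $\Ass(S/I)$ whose sum is $\mm$; by Remark~\ref{size} one may assume $A_1 = \{1,\ldots,w+1\}$, so $\sum_{j=1}^{w+1} P_j = \mm$. In the polarized ring $S'$, every variable $X_{k,l}$ with $1 \le l \le a_k$ lies in some minimal prime of $I^p$ (choose any $i$ with $a_k^i = a_k$ and any minimal prime of $Q_i^p$ that contains $X_{k,l}$), so $\sum_{P \in \Ass(S'/I^p)} P = \mm'$, the maximal ideal of $S'$. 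Consequently $h' = n'$ and $\size I^p = v' - 1$, where $v'$ is the minimum cardinality of a subset of $\Ass(S'/I^p)$ summing to $\mm'$. The theorem is therefore equivalent to the inequality $v' \le w + c + 1$.

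To establish this I would build an explicit covering collection in two layers. Apply Algorithm~\ref{algo:topbase} to the submatrix $M_{A_1}$ of $M$ formed by the rows indexed by $A_1$, obtaining $c_j^{\ast}$ for $j \in A_1$; the counting argument used in the Remark following Definition~\ref{defbar}, applied now to the restricted problem, yields $\sum_{j \in A_1} c_j^{\ast} \le (w+1) + c^{A_1}$, where $c^{A_1} = \sum_{k=1}^n (a_k^{A_1} - 1)$ and $a_k^{A_1} = \max_{j \in A_1} a_k^j$. For each $j \in A_1$ and each $l \in [c_j^{\ast}]$ the prime $\xbar{Q^p_{j,l}} = (X_{k,\min\{l,a_k^j\}} : k \in \supp P_j)$ contains $I^p$, and these primes jointly cover every variable $X_{k,l}$ with $l \le a_k^{A_1}$. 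For each pair $(k,l)$ with $a_k^{A_1} < l \le a_k$, pick a minimal prime $\sigma_{k,l} \in \Ass(S'/I^p)$ containing $X_{k,l}$ (possible because $X_{k,l} \in \mm'$). The total count is at most $\sum_{j \in A_1} c_j^{\ast} + \sum_{k=1}^n (a_k - a_k^{A_1}) \le (w+1) + c^{A_1} + (c - c^{A_1}) = w + c + 1$, while the sum of all chosen primes is $\mm'$ by construction.

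The main obstacle is to guarantee that the first-layer primes $\xbar{Q^p_{j,l}}$ can actually be taken inside $\Ass(S'/I^p)$, i.e.\ to be minimal over $I^p$: a strictly smaller minimal prime $P' \subsetneq \xbar{Q^p_{j,l}}$ may exist, arising from some $Q_i^p$ with $\supp P_i \subsetneq \supp P_j$, in which case $\xbar{Q^p_{j,l}}$ must be replaced by such a $P'$. One then has to verify that every variable $X_{k,1}$ dropped by such a replacement is still covered by another prime in the collection --- either by a different $\xbar{Q^p_{j',l'}}$ with $j' \in A_1$ and $k \in \supp P_{j'}$ (which exists because $\bigcup_{j \in A_1} \supp P_j = [n]$), or as a side effect of a judicious choice of $\sigma_{k',l'}$. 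Coordinating these choices so that the count stays at $w + c + 1$ while coverage is preserved is the technical heart of the argument, and it relies essentially on the structure supplied by Algorithm~\ref{algo:topbase}.
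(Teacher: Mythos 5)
Your strategy is essentially the same as the paper's --- reduce to counting how many associated primes are needed to sum to the maximal ideal $\mm'$ of $S'$, and produce a collection of at most $w+c+1$ covering primes by running Algorithm~\ref{algo:topbase} on (a submatrix of) $M$ and combining diagonal primes $\xbar{Q^p_{j,l}}$ with a few extra primes for the uncovered variables. The arithmetic in your counting step is fine, and restricting the top-base algorithm to the rows of $A_1$ is a legitimate variant of what the paper does.

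However, the obstacle you flag in your last paragraph is a genuine gap and not a side issue: the definition of $\size$ requires choosing the covering subset from the irredundant primary decomposition of the target ideal, i.e.\ from $\Ass(S'/I^p)$, and the diagonal primes $\xbar{Q^p_{j,l}}$ need not be minimal over $I^p$. Whenever some $P_i$ with $\supp P_i \subsetneq \supp P_j$ produces a component of $Q_i^p$ with matching second indices, the prime $\xbar{Q^p_{j,l}}$ is strictly larger than a minimal prime of $I^p$ and is therefore not an eligible choice. Your sketch acknowledges this but offers no mechanism for repairing the count when such replacements strip off variables, so as written the argument does not close.

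The paper avoids this difficulty by an extra step that your proposal is missing: it introduces the auxiliary ideal $\xbar{I^p}=\bigcap_{i,j}\xbar{Q^p_{i,j}}$ and first proves $\size I^p = \size \xbar{I^p}$, reducing the problem to an ideal whose irredundant irreducible decomposition \emph{is} the family of diagonal primes. Once one works with $\xbar{I^p}$, the primes you want to use are precisely the components of the decomposition, so the minimality concern disappears, and then the count $\size \xbar{I^p}\le D-a-1\le c+w$ (with $D=\sum_i c_i$ and $a=r-(w+1)$, using Remark~\ref{size} to discard $\xbar{Q^p_{j,1}}$ for $j\ge w+2$) goes through directly. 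To complete your argument you would either need to establish the identity $\size I^p=\size\xbar{I^p}$ yourself, or else prove that the specific diagonal primes arising from your restricted top base are in fact minimal over $I^p$; neither is supplied in the proposal, so the proof as it stands is incomplete.
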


\begin{proof}
From the definition of $\size$ we see that $\size Q_i^p = \size \xbar{Q_i^p}$ and that
 $\size I^p = \size \xbar{I^p}$, where
 $$
 \xbar{I^p} = \bigcap\limits_{i=1}^r \xbar{Q_i^p}= \bigcap\limits_{\substack{1 \leq i \leq r \\ 1 \leq j \leq c_i}} \xbar{{Q_i^p}_j},
 $$
therefore $\xbar{I^p}$ has an irredundant decomposition composed of $D=\sum\limits_{i=1}^r c_i$ irreducible monomial ideals.

According to Remark \ref{size} we can assume that $\sqrt{Q_j} \subset \sqrt{\sum\limits_{i=1}^{w+1} Q_i}$, for all $t+2 \leq j \leq r$ and set $a=r-(w+1)$. We see that $\sqrt{Q_j} \subset \sqrt{\sum\limits_{i=1}^{w+1} Q_i}$,
for all $w+2 \leq j \leq r$ implies that $\xbar{{Q_j^p}_1} \subset \sum\limits_{i=1}^{w+1} \xbar{{Q_i^p}_1}$, for all $w+2 \leq j \leq r$.

Then
\begin{equation} \label{eq1}
\size I^p = \size \xbar{I^p} = \size \Bigg[\Bigg( \bigcap\limits_{\substack{1 \leq i \leq w+1 \\ 1 \leq j \leq c_i}} \xbar{{Q_i^p}_j}\Bigg) \cap  \Bigg(\bigcap\limits_{\substack{w+2 \leq i \leq r \\ 2 \leq j \leq c_i}} \xbar{{Q_i^p}_j}\Bigg)\Bigg].
\end{equation}

Notice that the last term has $D-a$ intervals.
Then $\size I^p \leq D-a-1 = D - r +w + 1 - 1 \leq c + w = \size I + c.$
\end{proof}

\begin{remark} \label{missingindex}
We see from equation \ref{eq1} that $\size I^p \leq \size I + \sum\limits_{\substack{i=1 \\ c_i > 0}} ^r (c_i - 1) \leq \size I + c.$ Thus, if $\{a_i : a_i > 1, i \in [n]\} \nsubseteq \{c_1,\ldots, c_r\}$, we have that
$$\size I^p \leq \size I + \sum\limits_{\substack{i=1 \\ c_i > 0}}^r (c_i - 1) < \size I + \sum\limits_{i=1}^n (a_i - 1) = \size I + c.$$
\end{remark}

\begin{example}\label{ex:no_eq}
Let $I=(X_1^2,X_2^2) \cap (X_3^2,X_4^2) = Q_1 \cap Q_2 \subset \KK[X_1,X_2,X_3,X_4]$ a monomial ideal. Then $\size I^p = 3 < \size I + c = 5.$
Indeed we see that $\size I = 1$ and that $c=4.$ Also note that
$$\size I = \size \xbar{{Q_1^p}_1} \cap \xbar{{Q_1^p}_2} \cap \xbar{{Q_2^p}_1} \cap \xbar{{Q_2^p}_2} = 3.$$
\end{example}


Let $I \subset S=\KK[X_1,\ldots,X_n]$ be a monomial ideal and let $I=\bigcap\limits_{i=1}^rQ_i$ an irredundant decomposition
of $I$ as an intersection of irreducible ideals, where the $Q_i=(X_1^{a_1^i},\ldots,X_n^{a_n^i}), a_j^i \in \mathbb{N}, j \in [n]$ are monomial ideals. Let $Q_i$ be $P_i$-primary.
For every $k \in [n]$, we  set:

$ T_k = \{ t : X_k ^ s \in G(Q_t), s \geq 2 \}$;

$ L_k = \{ l : X_k \in P_l \} $;

$ U_k = \{ u : X_k \in Q_u, \max(B^u_{[n]}) > 1 \}$.

We may now formulate the main result of this paper, which fully describes the (particular) cases when the equality holds.

\begin{theorem} \label{th:main}Let $I \subset S$ be a monomial ideal. Then  $\size I^p = \size I + c$ if and only if $Q_i = < \{X_{i_1}^{a_{i_1}^i},X_{i_2},\ldots X_{i_s} \}|\ s \in [n],\ a_{i_1} \geq 1 >$ for all $i \in [r]$ and,
if $X_k \in P_i \cap P_j,\ 1 \leq i < j \leq n,\ k \in [n]$, then one of the following is true:
\begin{enumerate}
	\item $T_k = \emptyset$ or
	\item $T_k \neq \emptyset$ and
	
	$(A)$  $\forall t \in T_k,\ \{t,l\} \nsubseteq A_h, \forall\ l \in L_k \setminus \{t\}, \forall\ h \in [m]$ and
	
	$(B)$ if there exists $t \in T_k \cap A_h$ for some $h \in [m]$, then $U_k = \emptyset.$
\end{enumerate}
\end{theorem}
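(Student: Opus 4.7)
The plan is to isolate the two inequalities appearing in the proof of Theorem~\ref{th:first}, namely
$$\size I^p \;\leq\; \size I + \sum_{c_i > 0}(c_i - 1) \;\leq\; \size I + c,$$
and to determine combinatorially when each becomes an equality. The characterisation in Theorem~\ref{th:main} should split naturally into a structural condition forcing the outer inequality to be tight and an overlap condition forcing the inner one to be tight.

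For the outer inequality, Remark~\ref{missingindex} states tightness is equivalent to $\{a_i : a_i > 1\} \subseteq \{c_1, \ldots, c_r\}$. Inspecting Algorithm~\ref{algo:topbase}, one sees that each row $i$ of $M$ contributes at most one $c_i$, picked from a single column. A short counting argument then shows that the containment holds iff each row of $M$ has at most one entry that is simultaneously $\geq 2$ and a top power, which translates exactly into the stated shape $Q_i = (X_{i_1}^{a_{i_1}^i}, X_{i_2}, \ldots, X_{i_s})$ with $a_{i_1}^i \geq 1$ and the remaining generators squarefree.

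For the inner inequality, I would assume the structural condition and examine the irredundant decomposition of $\xbar{I^p}$ used in~(\ref{eq1}). Fixing $A_1 = \{1,\ldots,w+1\}$ as in Remark~\ref{size}, the natural cover
$$\mathcal{C} = \{ \xbar{Q_{i,1}^p} : i \in A_1 \} \cup \{ \xbar{Q_{i,j}^p} : i \in [r],\ 2 \leq j \leq c_i \}$$
has $D - a$ members and its radical equals $\sqrt{I^p}$. The inner inequality is strict precisely when some $\xbar{Q_{t,j}^p}$ with $j \geq 2$ can be dropped from $\mathcal{C}$; equivalently, every variable $X_{k,j}$ appearing in it must already sit inside the sum of the other members of some minimal cover of $\sqrt{I^p}$. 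Translating this back to the original exponents via the sets $T_k$, $L_k$ and $U_k$, such a redundancy arises in exactly two patterns: either another component $l \in L_k \setminus \{t\}$ lies with $t$ in a common minimal cover $A_h$ (violating (A)), or $t$ itself already lies in some $A_h$ and a different component $u \in U_k$ supplies the collapsing high-power slice (violating (B)).

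The main obstacle is the bookkeeping in this final translation. The slices $\xbar{Q_{t,j}^p}$ for different $j$ interact with the minimal covers $A_h$ in a way that is asymmetric depending on whether $t \in A_h$ or not, and one must verify that every failure mode of the redundancy test is captured by precisely one of (A) or (B), with no overlap and no gap. Once this classification is in place, the two directions of the theorem follow: the \emph{if} direction by showing $\mathcal{C}$ is genuinely minimal when both conditions hold, and the \emph{only if} direction by exhibiting, for each failure of the structural or overlap conditions, an explicit reduction of $\mathcal{C}$ that witnesses $\size I^p < \size I + c$.
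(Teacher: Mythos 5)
There is a genuine gap: your plan rests on the claim that tightness of the outer inequality $\sum_{c_i>0}(c_i-1)\leq c$ is equivalent to the structural condition on the $Q_i$, and this is false in both directions of your translation. First, Remark \ref{missingindex} only gives one implication (if the containment fails, the total equality fails); it does not assert an equivalence. Second, your proposed row-criterion (``at most one entry that is simultaneously $\geq 2$ and a top power'') is strictly weaker than the shape required in Theorem \ref{th:main}, which demands at most one entry $\geq 2$ in each row, top power or not. A concrete counterexample to your split is $I=(X_1^2,X_2^3)\cap(X_1^3,X_2^2)\subset \KK[X_1,X_2]$: the top base is $c_1=c_2=3$, so every column with $a_j>1$ is selected and the outer inequality is an equality ($\sum(c_i-1)=4=c$), and each row has exactly one top-power entry $\geq 2$, so your criterion is satisfied; yet neither $Q_i$ has the required shape, and indeed $\size I^p=2<\size I+c=4$. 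Thus the failure of the structural condition is detected here only through the \emph{inner} inequality, so the clean dichotomy ``structural condition $\iff$ outer tight, conditions (A),(B) $\iff$ inner tight'' cannot be the skeleton of a correct proof.

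The paper's argument is organized differently: the ``only if'' direction derives the structural condition not from the counting of Remark \ref{missingindex} alone but from explicit redundancy arguments on the slices $\xbar{{Q_i^p}_j}$ (the ``absence of variable $\x{1}{2}$'' device), i.e., by exhibiting a droppable ideal in the decomposition of \eqref{eq1} whenever some $Q_i$ has two exponents $>1$, and then similar drop arguments when (A) or (B) fails; the ``if'' direction is a direct count showing that under the structural condition together with (A) and (B) no slice $\xbar{{Q_i^p}_j}$, $j\geq 2$, can be covered by the others, because $\x{i}{j}$ occurs in only one ideal of the cover. Your second step (redundancy analysis of the cover $\mathcal{C}$ and its translation into $T_k$, $L_k$, $U_k$) is in the right spirit and close to what the paper does, but you explicitly defer the case analysis showing that every redundancy is captured by exactly a violation of (A) or (B); that bookkeeping is the actual content of the proof, and combined with the incorrect reduction of the structural condition to outer tightness, the proposal as it stands does not establish the theorem.
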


\begin{proof}

"$\Longleftarrow$"

$(1)$ Let $Q_i = (X_i^{a_i^i},X_{i_1},\ldots,X_{i_t}),\ i\in [r],\ a_i > 1,\ t \in [n]$. If such a $Q_i$ does not exist, then $c=0$ and $I=I^p$. Thus we have that
$X_i \notin P_j$ for all $j \in [r] \setminus \{i\}$, thus $i \in A_h$, for all $h \in [m]$. We may assume that $Q_i,\ i \in [s]$ have a top power $\geq 2$ and that $Q_i,\ i > s$ are generated by variables, that is $\max(B^i_{[n]})=1$, for all $i > s$.  Then the ideals in the second intersection from the last term in the equation \ref{eq1} are generated by variables. Then we get that
$$\size I^p = \sum_{i=1}^s c_i + w - s = \sum_{i=1}^n (a_i - 1) + w = \size I + c,$$
because for all $j \in [a_i^i]$ and  $i\in [s]$ we only have the ideal $( \x{i}{j},\x{i_1}{1},\ldots,\x{i_t}{1})$ to cover $\x{i}{j}$.

$(2)$ Let $\size I = w$. Consider that $Q_{w+2},\ldots,Q_s$ have top powers $\geq 2$ and that $Q_{s+1},\ldots,Q_r$ have top powers $=1$. Then according to algorithm \ref{algo:topbase} we have that $c_i \leq 1$, for all $i > s$ and using equation \ref{eq1} we see that for computing $\size I^p$ we don't need the ideals $Q_i,\ i>s.$ Moreover, we shall consider only the ideals $Q_i,\ w+1 < i \leq s$ with $c_i > 1$. Thus we may suppose that $c_i > 1,\ w+1 < i \leq s.$

If, for example, we have that $C[i] = (a_i^j,i,i),\ i \geq w+2$, we show that in the sum of the other intervals we can cover only one variable from $\{\x{i}{1},\ldots,\x{i}{a_i^j}\}$ and that variable is $\x{i}{1}$ from our choice in definition \ref{defbar}. Indeed, if we have $x_i^t \in G(Q_1),\ t<a_i^j$, then $c_1$ can be at most $1$, so we cover $\x{i}{1}$. Condition $(A)$ tells us that $x_i \notin P_j,\ 1 < j \leq w+1.$ Condition $(B)$ tells us that if we have, for example, $C[1] = (a_1^d,1,1)$ and $C[i]=(a_i^j,i,i),\ i \neq 1,\ a_i^j > 1$, then $x_1 \notin G(Q_i)$, so that we can not cover $\x{1}{1}$ in the sum of the other intervals, that is $\xbar{{Q_i^p}_l},\ 2 \leq l \leq a_i^j$.

It follows that
$$\size I^p = \sum_{i=1}^{w+1} c_i + \sum_{i=w+2}^s (c_i-1) - 1 = \size I + \sum_{i=1}^{w+1} (c_i-1) + \sum_{i=w+2}^s (c_i-1) = \size I + c.$$
\bigskip

"$\Longrightarrow$"

Assume that $\size I^p = \size I + c$. Then Remark \ref{missingindex} gives us the following inclusion $\{a_i : a_i > 1,\ i \in [n]\} \subset \{c_1,\ldots, c_r\}$. Let $\size I = w$.

First suppose that $X_1^{a_1^1},X_2^{a_2^1} \in G(Q_1),\ a_1^j > 1,\ j \in [2]$.  If $c_1 > a_1^1$, then using Remark \ref{missingindex} we get that there exists $1 < j \leq r$ such that $c_j = a_1^j = a_1$. Then from equation \ref{eq1} we can skip the ideal $\xbar{{Q_j^p}_2}$ because we have $\x{1}{2} \in \xbar{{Q_1^p}_2}$ and we find all the other variables $\x{p}{2},\ 2 \leq p \leq n$ in $\xbar{{Q_w^p}_2}$, where $c_w = a_p$. We will call this procedure \textit{the absence of variable $\x{1}{2}$}. Thus we get that $\size I^p < \size I + c$.

Now, if $c_1 = a_1^1=a_1$ let $2 \leq j \leq r$ such that $c_j = a_2$. Then again, using \textit{the absence of variable $\x{2}{2}$}, we get that $\size I^p < \size I + c$.
Thus we see that $a_i^j = 1$ except for at most one $i \in [n]$, for all $j \in [r]$.


Suppose that $\{1,2\} \subset A_1,\ X_1^{a_1^1} \in G(Q_1),\ X_1^{a_1^2} \in G(Q_2),\ a_1^1 > 1,\ a_1^2 \geq 1,\ a_1^1 \geq a_1^2.$  If $c_1=a_1^1$,  then
\begin{align*}
\size I^p & \leq \size   \Bigg[\Bigg( \bigcap\limits_{\substack{1 \leq i \leq w+1 \\ 1 \leq j \leq c_i}} \xbar{{Q_i^p}_j}\Bigg) \cap  \Bigg(\bigcap\limits_{\substack{w+2 \leq i \leq r \\ 2 \leq j \leq c_i}}
             \xbar{{Q_i^p}_j}\Bigg)\Bigg]\\
          & \leq \size \Bigg[\Bigg(\bigcap\limits_{\substack{2 \leq i \leq w+1 \\ 1 \leq j \leq c_i}} \xbar{{Q_i^p}_j}\Bigg) \cap  \Bigg(\bigcap\limits_{\substack{w+2 \leq i \leq r \\ 2 \leq j \leq c_i}} \xbar{{Q_i^p}_j} \cap \xbar{{Q_1^p}_2} \ldots \cap \xbar{{Q_1^p}_{a_1^1}}\Bigg)\Bigg]\\
          & \leq \sum\limits_{i=2}^{w+1} c_i + \sum\limits_{i=w+2}^r (c_i-1) + c_1 - 1 \\
          & =  \size I - 1 + c.
\end{align*}
We skipped the ideal $\xbar{{Q_1^p}_1}$ because $\xbar{{Q_1^p}_1} \subset \xbar{{Q_1^p}_2} + \sum_{j=2}^{w+1} \xbar{{Q_j^p}_1}$.

If $c_1 > a_1^1$, then there exists $2 \leq j \leq r$ such that $c_j = a_1^j = a_1 > 1$ and in equation \ref{eq1} we can skip over the ideal $\xbar{{Q_j^p}_1}$ because $\xbar{{Q_j^p}_1} \subset \xbar{{Q_j^p}_2} + \sum\limits_{\substack{i=1 \\ i \neq j}}^{w+1} \xbar{{Q_i^p}_1}$, thus $\size I^p \leq \size I + c -1$.

Now suppose that $1 \in A_1,\ X_1^{a_1^1} \in G(Q_1),\ a_1^1 > 1,\ X_1,X_2^{a_2^u} \in G(Q_u)$ and $c_u = a_2^u > 1$. As we have seen above, we may assume that $c_1 = a_1^1$.
Then we have $\xbar{{Q_1^p}_1} \subset \sum_{j=2}^{w+1} \xbar{{Q_j^p}_1} + \xbar{{Q_u^p}_2}$, thus we get $\size I^p \leq \size I +c - 1$.
\end{proof}

\begin{example}
Consider the monomial ideal $I = (X_1^2,X_2) \cap (X_2,X_3) \cap (X_3,X_4) \cap (X_2,X_4) = \bigcap\limits_{i=1}^4 Q_i \subset \KK[X_1,X_2,X_3,X_4]$. Then $A = \{ \{1,2,3\},\{1,2,4\} \}$ and $c=1$, thus $\size I = 2$. Using Theorem \ref{th:main} we get that $\size I^p = \size  I + c$. Indeed, $\size I^p = \size (\x{1}{1},\x{2}{1}) \cap (\x{1}{2},\x{2}{1}) \cap (\x{2}{1},\x{3}{1}) \cap (\x{3}{1},\x{4}{1}) \cap (\x{2}{1},\x{4}{1})$.
\end{example}

\begin{example}
Let $I=(X_1^2,X_2) \cap (X_1,X_3) \cap (X_2,X_3) =  \bigcap\limits_{i=1}^3 Q_i \subset \KK[X_1,X_2,X_3]$. Then $A = \{ \{1,2\},\{1,3\}, \{2,3\} \}$ and $c=1$, thus $\size I =1$. We have that $\size I^p = \size (\x{1}{2},\x{2}{1}) \cap (\x{1}{1},\x{3}{1}) = \size \xbar{{Q_1^p}_2} \cap \xbar{Q_2^p} = 1$, thus $\size I^p < \size I + c$. We see that $I$ does not respect condition $(2) (A)$ from Theorem \ref{th:main}.
\end{example}

\begin{example}
Let $I = (X_1^2,X_2) \cap (X_3,X_4) \cap (X_1,X_4^2) = \bigcap\limits_{i=1}^4 Q_i \subset \KK[X_1,X_2,X_3,X_4]$. Then $A = \{ \{1,2\} \}$ and $c=2$ thus $\size I =1$. We have that $\size I^p = \size (\x{1}{2},\x{2}{1}) \cap (\x{3}{1},\x{4}{1}) \cap (\x{1}{1},\x{4}{2})= \size \bar{Q_1^p}_2 \cap \bar{Q_2^p} \cap \bar{Q_3^p}_2 = 2$, thus $\size I^p < \size I + c$. We see here that $X_4$ respects all the conditions from Theorem \ref{th:main}, but $X_1$ does not respect the condition $(2) (B)$.
\end{example}

\begin{example}

Let $I = (X_1^{k+1},X_2^k) \cap (X_1,X_2^{k+1}) \subset \KK[X_1,X_2].$ Then $\size I^p = \size I + c - k.$
Indeed, we have $\size I = 0, c = 2k$ and $\size I^p =\size (\x{1}{2},\x{2}{1}) \cap \ldots \cap (\x{1}{k+1},\x{2}{k}) \cap (\x{1}{1},\x{2}{k+1})$. We see that each variable appears only once, thus the size can not be smaller.
\end{example}

As an application of our main result, by using Theorem \ref{th:main} and Theorem \ref{teo:sdepth}, we easily deduce the following extension of Theorem \ref{teo:sdepth_size}.

\begin{corollary}\label{cor:sdepth} Let I be a monomial ideal of $S$ such that, either $I$ is squarefree, or $I$ is as described in Theorem \ref{th:main}. Then
\begin{enumerate}
\item $\sdepth I \ge \size I + 1;$
\item $\sdepth S/I \ge \size I.$
\end{enumerate}
\end{corollary}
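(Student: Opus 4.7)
The plan is to reduce both inequalities to the squarefree case of Theorem \ref{teo:sdepth_size} by passing through polarization. If $I$ is already squarefree, there is nothing to prove, so I would focus on the case when $I$ satisfies the hypotheses of Theorem \ref{th:main}. In that case the polarization $I^p \subset S'$ is a squarefree monomial ideal, so Theorem \ref{teo:sdepth_size} applies directly to $I^p$, giving
\[
\sdepth I^p \geq \size I^p + 1 \qquad \text{and} \qquad \sdepth S'/I^p \geq \size I^p.
\]

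Next, I would use Theorem \ref{th:main} to compute $\size I^p = \size I + c$, where $c = n' - n$, and use Theorem \ref{teo:sdepth} to translate the Stanley depth inequalities back to $I$ via $\sdepth I^p = \sdepth I + c$ and $\sdepth S'/I^p = \sdepth S/I + c$. Substituting these identities into the two displayed inequalities and canceling $c$ on both sides of each, I would obtain
\[
\sdepth I \geq \size I + 1 \qquad \text{and} \qquad \sdepth S/I \geq \size I,
\]
which is exactly the desired conclusion.

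There is essentially no obstacle: the proof is a one-line combination of Theorem \ref{teo:sdepth_size} (applied to the squarefree ideal $I^p$), Theorem \ref{th:main} (to identify $\size I^p - c$ with $\size I$), and Theorem \ref{teo:sdepth} (to identify $\sdepth I^p - c$ with $\sdepth I$, and similarly for the quotient). The only point worth stressing in the write-up is that the hypothesis on $I$ is precisely what is needed so that the loss under polarization in the size does \emph{not} exceed the gain $c$ in the Stanley depth, so that the inequalities transfer without loss.
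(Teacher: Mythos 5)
Your proof is correct and matches the paper's intended argument exactly: the corollary is stated to follow ``by using Theorem~\ref{th:main} and Theorem~\ref{teo:sdepth},'' and that is precisely the chain you carry out, namely applying Theorem~\ref{teo:sdepth_size} to the squarefree polarization $I^p$, invoking the equality $\size I^p = \size I + c$ from Theorem~\ref{th:main}, and cancelling $c$ via Theorem~\ref{teo:sdepth}. No discrepancies.
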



\section{The behavior of size under generic deformations}\label{sec:deform}

The notion of \emph{deformation} of a monomial ideal was introduced by Bayer et al.~\cite{BPS} and further developed in Miller et al.~\cite{MSY}.

\begin{definition} (1) Let $\MM = \set{m_1, \dotsc, m_r} \subset S$ be a set of monomials. For $1\leq i\leq r$ let $a^i=(a^i_1, \dotsc, a^i_n) \in \NN^n$ denote the exponent vector of $m_i$.
	A \emph{deformation} of $\MM$ is a set of vectors $\ve_i = (\ve^i_1, \dotsc, \ve^i_n) \in \NN^n$ for $1 \leq i \leq r$
	subject to the following conditions:
	\begin{equation}\label{eq:deform}
	a^i_j > a^k_j \implies a^i_j + \ve^i_j > a^k_j + \ve^k_j \quad\text{ and }\quad a^i_k = 0 \implies \ve^i_k = 0.
	\end{equation}
	
	(2) Let $I \subset S$ be a monomial ideal with generating set $G_I$. A \emph{deformation} of $I$ is a deformation of  $G_I$.
	We set $\gen{I} := (g\cdot \xf^{\ve_g} \with g \in G_I)$  to be the ideal generated by the deformed generators.
\end{definition}

The most important deformations are the generic deformations. Let us recall the definition from \cite{MSY}.
\begin{definition} (1) A monomial $m \in S$ is said to \emph{strictly divide} another monomial $m' \in S$ if $m \mid \frac{m'}{X_i}$ for each variable $X_i$ dividing $m'$.

(2) A monomial ideal $I \subset S$ is called \emph{generic} if for any two minimal generators $m,m'$ of $I$ having the same degree in some variable, there exists a third minimal generator $m''$ that strictly divides $\lcm(m,m')$.

(3) A deformation of a monomial ideal $I$ is called \emph{generic} if the deformed ideal $\gen{I}$ is generic.
\end{definition}

\begin{definition} \cite{Mi}
A monomial ideals is considered generic if for any $m_1,m_2 \in$ Min $(I)$ and any variable $X_i$ if $deg_{X_i}(m_1) = deg_{X_i}(m_2),$ then $deg_{X_i}(m_1)=0.$ That is, no minimal generator of $I$ has the same exponent for any variable.
\end{definition}

\begin{proposition}\label{prop:def} Let $I \subset S=\KK[X_1,\ldots,X_n]$ be a monomial ideal and let $I=\bigcap\limits_{i=1}^rQ_i$ an irredundant decomposition
of $I$ as an intersection of irreducible ideals, where $\sqrt{Q_i} = P_i.$ Let $I_{\epsilon}$ be the generic deformation of $I.$ Then $\size I \geq \size I_{\epsilon}.$
\end{proposition}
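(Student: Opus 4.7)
The plan is to reduce the size inequality to an inclusion of associated prime sets, and then to establish this inclusion by transporting colon-witnesses from $I$ to $I_{\epsilon}$. By the third Remark of Subsection~\ref{subsec:size}, an inclusion $\Ass(S/I)\subseteq \Ass(S/J)$ implies $\size I\geq \size J$ for any two monomial ideals. Taking $J=I_{\epsilon}$, the proposition thus follows once we prove
\[
\Ass(S/I)\subseteq\Ass(S/I_{\epsilon}).
\]

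First I would verify that minimal primes are preserved. The second condition in~\eqref{eq:deform}, namely $a^i_k=0\Rightarrow\ve^i_k=0$, forces $\supp(m)=\supp(m\cdot \xf^{\ve})$ for every minimal generator $m$ of $I$; since the radical of a monomial ideal is determined by the supports of its minimal generators, one gets $\sqrt{I}=\sqrt{I_{\epsilon}}$, so $I$ and $I_{\epsilon}$ share the same minimal primes.

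The substantial step treats embedded primes. Given $P=P_F=(X_i\with i\in F)\in\Ass(S/I)$, pick a monomial $u\notin I$ with $(I:u)=P$ and, for each $i\in F$, a minimal generator $m^{(i)}$ of $I$ with $m^{(i)}\mid uX_i$. The candidate witness for $I_{\epsilon}$ would be $u':=u\cdot \xf^{\delta}$, with $\delta\in\NN^n$ chosen to absorb the finitely many relevant deformation shifts $\ve^{(i)}$. The inclusion $(I_{\epsilon}:u')\supseteq P$ is then immediate, since each divisibility $m^{(i)}\mid uX_i$ transports to $m^{(i)}\xf^{\ve^{(i)}}\mid u'X_i$ by construction. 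For the reverse inclusion — namely $u'\notin I_{\epsilon}$ and $X_k u'\notin I_{\epsilon}$ for $k\notin F$ — the first condition in~\eqref{eq:deform}, $a^i_j>a^k_j\Rightarrow a^i_j+\ve^i_j>a^k_j+\ve^k_j$, is precisely what rules out any new divisibility from emerging in the deformed ideal.

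The main obstacle I anticipate is the combinatorial bookkeeping in choosing $\delta$ and verifying the reverse inclusion, in particular checking that the shift does not accidentally push $u'$ into $I_{\epsilon}$ or $X_k u'$ into $I_{\epsilon}$ for $k\notin F$. A cleaner alternative would be to invoke the refinement of irreducible decompositions under generic deformations established in \cite{MSY} as a black box, from which the required $\Ass$-inclusion is an immediate consequence.
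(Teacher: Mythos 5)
Your reduction of the inequality to the containment $\Ass(S/I)\subseteq\Ass(S/I_{\epsilon})$ via the remark in Subsection \ref{subsec:size} is sound, and your ``cleaner alternative'' is in substance exactly the paper's proof: the paper invokes \cite{Mi}, Theorem 6 (the algorithmic form of the specialization result of \cite{MSY}) to undeform the irredundant irreducible decomposition $I_{\epsilon}=\bigcap_{i=1}^r Q_i$ into a possibly redundant irreducible decomposition $I=\bigcap_{i=1}^r\phi(Q_i)$ with $\sqrt{\phi(Q_i)}=\sqrt{Q_i}$; this yields precisely the containment of associated primes (equivalently, every covering family of primes for $I$ is drawn from the larger family available for $I_{\epsilon}$, while the total radical sums coincide, so the covering number for $I$ cannot be smaller). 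If you take that route, your argument is correct and essentially the paper's.

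The self-contained witness-transport argument, however, has a genuine gap, and it is exactly the one you flag. The conditions \eqref{eq:deform} only compare exponents of generators with exponents of \emph{other generators}; they say nothing about comparisons with the colon witness $u$. Transporting the divisibilities $m^{(i)}\mid uX_i$ by taking $u'=u\xf^{\delta}$ with $\delta$ dominating the shifts $\ve^{(i)}$ does give $(I_{\epsilon}:u')\supseteq P_F$, but the reverse containment is not ``ruled out'' by \eqref{eq:deform}: a generator $m$ with $m\nmid u$ may fail to divide $u$ only in a variable $X_j$ lying in the support of $\delta$, and then the deformed generator $m\xf^{\ve_m}$ can perfectly well divide $u\xf^{\delta}$, so $u'$ (or $X_ku'$ for $k\notin F$) may land in $I_{\epsilon}$. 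Choosing $\delta$, and in fact re-choosing $u$ (say as a socle-type witness attached to an irreducible component), so that no new divisibility arises is not routine bookkeeping; it is essentially the content of the specialization theorem of \cite{MSY} and \cite{Mi} itself. As written, the direct route does not establish the containment, so you should either supply that argument in full or cite the specialization result as a black box, which is what the paper does.
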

\begin{proof}
We follow the proof of \cite{Mi} Theorem 6, except we consider any generic transformation, not just the generic ones. By construction, we have an unique map $\phi$ from the individual powers of variables amongst the generators of $I$ to individual powers of variables amongst the generators of $I_{\epsilon}.$ Let $I_{\epsilon} = \bigcap\limits_{i=1}^rQ_i$ be  the unique irredundant irreducible primary
decomposition of $I_{\epsilon}$, then by \cite{Mi} Theorem 6 we have that $I = \bigcap\limits_{i=1}^r \phi(Q_i)$ is an irreducible primary
decomposition of $I.$ The decomposition may not be irredundant.

If we consider $\size I_{\epsilon} = t < r$ with $\sqrt{\sum_{j=1}^tQ_{i_j}}=	\sqrt{\sum_{i=1}^rQ_i}$. then we also have
$$\sqrt{\sum_{j=1}^t \phi(Q_{i_j})}=	\sqrt{\sum_{i=1}^r \phi(Q_i)} = \sqrt{\sum_{i=1}^rQ_i}.$$
Thus we get the inequality $\size I \geq \size I_{\epsilon},$ because the decomposition for $I$ may be redundant.
\end{proof}

\begin{example}
Let $I = (xyt,xyw,xtw,yzt,yzw,ztw) \subset K[z,y,z,t,w]$ and let $\epsilon_1$ be a deformation and $\epsilon_2$ a generic deformation such that $I_{\epsilon_1} = (xyt,xyw,xt^3w,yzt^2,yzw,ztw)$ and $I_{\epsilon_2} = (x^3y^4t,x^2y^2w,xt^3w^3,y^3zt^2,yz^2w^2,z^3tw^4).$ Then we have the following unique irreducible primary decompositions:
\begin{flalign*}
I  = & (x,z) \cap (y,t) \cap (y,w) \cap (t,w); & \\
I_{\epsilon_1}  = & (x,z) \cap (y,t) \cap (y,w) \cap (t,w) \cap (x,t^2,w) \cap (y,z,t^3);\\
I_{\epsilon_2}  = & (x,z) \cap (y,t) \cap (y^3,y^2w,yw^2,w^3) \cap (t,w) \cap (y^4,y^3t^2,y^2w,yw^2,t^3,w^4) \cap \\
                  & (x^3,x^2w,t^2,w^2) \cap (y^4,y^3z,y^2w,z^2,w^3) \cap (x^3,x^2w,z,w^3) \cap (x^2,xw^3,y^3,yw^2,w^4) \cap \\
                  & (y^2,yz^2,z^3,t^3) \cap (x^2,z^2,zt^2,t^3) \cap (x,y^3,yz^2,z^3) \cap  (x^3,x^2y^2w,x^2z^2,y^3,yz^2w^2,z^3,w^3) \cap \\
                  & (x^3,x^2y^2,y^3,yz^2,z^3,t^3).
\end{flalign*}
We have that $2 = \size I > \size I_{\epsilon_1} = \size I_{\epsilon_2} = 1$, thus the inequality from the above proposition may be strict.
\end{example}

\section*{Acknowledgements}


They were partially supported  by project  PN-II-RU-TE-2012-3-0161, granted by the Romanian National Authority for Scientific Research,
CNCS - UEFISCDI, during the preparation of this work.

\end{document}